\documentclass[review,12pt]{elsarticle}

\usepackage{amsmath, amsthm, amssymb, amsfonts}
\usepackage{geometry}
\usepackage[colorlinks=true, allcolors=blue]{hyperref}

\newtheorem{theorem}{Theorem}[section]
\newtheorem{lemma}[theorem]{Lemma}

\theoremstyle{definition}
\newtheorem{definition}[theorem]{Definition}

\newtheorem{remark}[theorem]{Remark}

\numberwithin{equation}{section}

\newcommand{\ep}{\varepsilon}
\def\R{{\mathbb{R}}}
\def\d{\displaystyle}
\def\e{{\varepsilon}}

\def\U{{\mathcal{F}^{\eta}_d}}
\def\V{{\mathcal{G}^{\eta}_d}}
\def\w{{\mathcal{\psi}^{\eta}_d}}
\def\GG{\V}

\begin{document}

\begin{frontmatter}

\title{
On the blow-up of solutions to scale-invariant wave equations with damping and mass: Beyond the positive discriminant restriction}
\author{Mohamed Ali Hamza}
\ead{mahamza@iau.edu.sa}
\address{Department of Basic Sciences, Deanship of Preparatory Year and Supporting Studies, Imam Abdulrahman Bin Faisal University, P. O. Box 1982, Dammam, Saudi Arabia.}

\begin{abstract}
In this article, we study the blow-up of solutions to scale-invariant semilinear wave equations with damping, mass, and a time-derivative nonlinearity:
$$\partial^2_{t}u-\Delta u+\frac{\mu}{1+t}\partial_tu+\frac{\nu^2}{(1+t)^2}u=|\partial_tu|^p, \qquad (x,t) \in \mathbb{R}^n\times[0,\infty),$$
with small initial data. In the existing literature, the analysis of such models is  exclusively restricted to the case where the discriminant $\delta = (\mu - 1)^2 - 4\nu^2$ is non-negative. This restriction is  considered essential because $\delta \ge 0$ ensures the non-oscillatory behavior of the underlying linear solution, which is fundamental for the application of standard test function methods. The principal contribution of this work is the extension of the blow-up result to the regime $\delta < 0$. We demonstrate that the blow-up region, defined by the Glassey-type critical exponent $p \in (1, 1 + \frac{2}{n+\mu-1})$, remains invariant regardless of the sign of the discriminant. Consequently, we clarify that the restriction $\delta \ge 0$ previously considered essential is necessitated by the limitations of standard test function methods rather than the intrinsic nature of the problem.
\end{abstract}
\begin{keyword}
Blow-up, critical curve, lifespan, nonlinear wave equations, scale-invariant damping, time-derivative nonlinearity
\end{keyword}

\end{frontmatter}

\section{Introduction}
We study  the following scale-invariant semilinear wave equation featuring damping, mass, and a time-derivative nonlinearity:
\begin{equation}\label{Equ}
\begin{cases}
\partial^2_{t}u-\Delta u+\frac{\mu}{1+t}\partial_tu+\frac{\nu^2}{(1+t)^2}u=|\partial_tu|^p, & (x,t) \in \mathbb{R}^n\times[0,\infty), \\
(u(x,0), \partial_tu(x,0))= \varepsilon(f(x), g(x)), & x\in \mathbb{R}^n,
\end{cases}
\end{equation}
where $\mu \ge 0$, $\nu^2 \ge 0$, and $p > 1$. The initial data $(f, g)$ are assumed to be positive functions with compact support in $B(0,R)$, where 
 $\varepsilon > 0$ represents the data size.

\par

In the absence of damping and mass ($\mu=\nu=0$), problem \eqref{Equ} reduces to the classic semilinear wave equation with derivative nonlinearity. The blow-up phenomenon in this context is well-documented and closely tied to the Glassey conjecture. The critical threshold is determined by the power $p_{\mathrm{Gla}}(n)$, defined as
\begin{equation}\label{Glassey}
p_{\mathrm{Gla}}(n) := 1+\frac{2}{n-1}.
\end{equation}
It is well-established that for $1 < p \le p_{\mathrm{Gla}}(n)$, no global solution exists under suitable positivity assumptions on the data, whereas for $p > p_{\mathrm{Gla}}(n)$, a global-in-time solution exists for sufficiently small $\varepsilon$ (see, e.g., \cite{Hidano1,Hidano2,John1,Sideris,Tzvetkov,Zhou1}).

\par

When damping is introduced ($\mu > 0, \nu = 0$), the blow-up region $p \in (1, p_{\mathrm{Gla}}(n+\mu)]$ and the associated lifespan estimates were established in \cite{Our2}, thereby improving upon the thresholds previously reported in \cite{PT, LT2}. Specifically, for the one-dimensional case, it was recently established \cite{HFglobal} that $p_{\mathrm{Gla}}(1+\mu)$ acts as the critical threshold for $\mu \in (0,2]$. These results confirm that the damping effectively modifies the dynamics, shifting the effective dimension of the problem to $n+\mu$.

\par

For non-vanishing $\mu$ and $\nu$, recent studies have extensively explored the semilinear wave equation \eqref{Equ}. Naturally, the global dynamics and blow-up phenomena of this nonlinear model are fundamentally determined by the properties of the associated linear operator, namely
\begin{equation}\label{Equl}
L_{\mu, \nu} (u):= \partial^2_{t}u-\Delta u+\frac{\mu}{1+t}\partial_tu+\frac{\nu^2}{(1+t)^2}u.
\end{equation}
A crucial element in this analysis is the discriminant
\begin{equation}\label{delta}
\delta = \delta(\mu, \nu) := (\mu-1)^2 - 4\nu^2,
\end{equation}
which characterizes the interplay between damping and mass. Qualitatively, the case $\delta \ge 0$ suggests that the dissipation term $\frac{\mu}{1+t}\partial_t u$ dominates the mass term $\frac{\nu^2}{(1+t)^2}u$.
 Crucially, $\delta$ is invariant under the transformation $V(t,x) = (1+t)^\alpha U(t,x)$. 
 Specifically, if $U$ satisfies  $L_{\mu, \nu} (U)=0$, then $V$ satisfies a similar equation with modified coefficients $\mu(V) = \mu(U)-2\alpha$ and $\nu^2(V) = \alpha^2 - (\mu-1)\alpha + \nu^2$, leaving the discriminant $(\mu(V)-1)^2 - 4\nu^2(V) = \delta(U)$ unchanged.

\par

To date, the existing literature on the semilinear wave equation with scale-invariant damping and mass has focused  exclusively on the regime $\delta \ge 0$. This includes extensive studies on power-type nonlinearities $|u|^q$, as well as very recent investigations into derivative-type $ |\partial_t u|^p$ and mixed nonlinearities $f(u, \partial_t u) = |u|^q + |\partial_t u|^p$ (see, e.g., \cite{Our3, PT, PalmieriNonlinear2018, PalRei18, DabbPal18, Pal19RF, Palmieri-phd} and references therein). This condition is typically invoked to ensure the existence of positive test functions. However, as noted in \cite{PT}, the assumption $\delta \ge 0$ appears to be a technical artifact of the integral transform method required to maintain real-valued indices rather than an intrinsic physical requirement of the blow-up mechanism.

\par

For the regime $\delta \ge 0$, a blow-up result for \eqref{Equ} was established in \cite{Our3}, providing an improvement over the results in \cite{PT}. In the latter, the blow-up region was restricted to $p \in (1, p_{\mathrm{Gla}}(n+\sigma)]$ 
with a shift parameter $\sigma$ depending on $\sqrt{\delta}$ 
for $\delta \in (0,1)$, but through a more refined functional approach, the analysis in \cite{Our3} extended this range to $p \in (1, p_{\mathrm{Gla}}(n+\mu)]$ for all non-negative $\delta$. This outcome confirms that the blow-up dynamics recover the characteristic behavior of the massless case whenever the discriminant is non-negative.\\

\par

In the present work, we show that  the positivity of the discriminant 
 is due to  a technical tool of previous methods rather than an intrinsic property of the equations. More precisely, we extend the blow-up result to the regime $\delta < 0$ for solutions of \eqref{Equ}, establishing that the subcritical region $p \in (1, p_{\mathrm{Gla}}(n+\mu))$ remains unchanged regardless of the sign of $\delta$. 
Ultimately, our framework generalizes and simplifies the approach initiated in \cite{Our3}, while maintaining broad applicability to a wider class of problems, including coupled systems and Tricomi-type models

Throughout this paper, $C$ denotes a generic positive constant which may depend on the parameters $n, p, \mu, d, \nu, \eta, R, f,$ and $g$, 
but is independent of $\varepsilon$. The value of $C$ may change from line to line. Where necessary, the specific dependence of a constant on certain parameters will be explicitly indicated (e.g., $C_{\eta, R}$).

\par

The outline of this article is organized as follows. In Section \ref{sec-main}, we define the notion of solutions for \eqref{Equ} in the energy space and state our main result. Section \ref{sec3} is devoted to the derivation of several technical lemmas essential for the analysis. Finally, the proof of the main theorem is provided in Section \ref{sec44}.

\section{Main Result}\label{sec-main}
In this section, we present our main result concerning the blow-up of solutions to \eqref{Equ}. We begin by introducing the notion of a weak solution within the natural energy space associated with the problem. Specifically, the weak formulation of \eqref{Equ} is defined as follows:

\begin{definition}\label{def1}
Let $T > 0$. We say that $u$ is an energy solution of \eqref{Equ} on $[0,T)$ if 
\begin{equation*}
u \in \mathcal{C}([0,T), H^1(\mathbb{R}^n)) \cap \mathcal{C}^1([0,T), L^2(\mathbb{R}^n)),
\end{equation*}
with  $\partial_t u \in L^p_{loc}(\mathbb{R}^n \times (0,T))$, such that for any test function $\Phi \in \mathcal{C}_0^{\infty}(\mathbb{R}^n \times [0,T))$ and for all $t \in [0,T)$, the following identity holds:
\begin{equation} \label{energysol2}
\begin{aligned}
&\int_{\mathbb{R}^n} \partial_t u(x,t) \Phi(x,t) \, dx - \int_{\mathbb{R}^n} \partial_t u(x,0) \Phi(x,0) \, dx - \int_0^t \int_{\mathbb{R}^n} \partial_t u(x,s) \partial_t \Phi(x,s) \, dx \, ds \\
&\quad + \int_0^t \int_{\mathbb{R}^n} \nabla u(x,s) \cdot \nabla \Phi(x,s) \, dx \, ds + \int_0^t \int_{\mathbb{R}^n} \frac{\mu}{1+s} \partial_t u(x,s) \Phi(x,s) \, dx \, ds \\
&\quad + \int_0^t \int_{\mathbb{R}^n} \frac{\nu^2}{(1+s)^2} u(x,s) \Phi(x,s) \, dx \, ds = \int_0^t \int_{\mathbb{R}^n} |\partial_t u(x,s)|^p  \Phi \, dx \, ds,
\end{aligned}
\end{equation}
together with the conditions $u(x,0)=\varepsilon  f(x)$  being satisfied in $H^1(\mathbb{R}^n)$.
\end{definition}
Integrating by parts in \eqref{energysol2}, one may derive the following equivalent formulation, which is often more convenient for our analysis:
\begin{equation} \label{energysol2-bis}
\begin{aligned}
&\int_{\mathbb{R}^n} \left[ \partial_t u(x,t) \Phi(x,t) - u(x,t) \partial_t \Phi(x,t) + \frac{\mu}{1+t} u(x,t) \Phi(x,t) \right] dx \\
&\quad + \int_0^t \int_{\mathbb{R}^n} u(x,s) \left[ \partial_s^2\Phi - \Delta \Phi - \frac{\partial}{\partial s} \left( \frac{\mu}{1+s} \Phi \right) + \frac{\nu^2}{(1+s)^2} \Phi \right] dx \, ds \\
&\quad = \int_0^t \int_{\mathbb{R}^n} |\partial_t u(x,s)|^p  \Phi \, dx \, ds + \varepsilon \int_{\mathbb{R}^n} \left[ \left( \mu f(x) + g(x) \right) \Phi(x,0) - f(x) \partial_t \Phi(x,0) \right] dx.
\end{aligned}
\end{equation}
In the following, we state the  main result of this article.
\begin{theorem} \label{blowup01}
Let $\mu > 0$, $d > \mu$, $\nu^2 > 0$, and $p > 1$. Assume that $f \in H^1(\mathbb{R}^n)$ and $g \in L^2(\mathbb{R}^n)$ are non-negative, compactly supported on $B(0, R)$, and do not vanish identically. 
Let $u$ be an energy solution of \eqref{Equ} on $[0, T_\varepsilon)$ with $\mathrm{supp}(u) \subset \{(x, t) : |x| \le t + R\}$. 
Then, there exists a constant $\varepsilon_0 > 0$ such that for all $0 < \varepsilon \le \varepsilon_0$, the lifespan $T_\varepsilon$ satisfies the upper bound estimate
\begin{equation}\label{life2}
\d T_\varepsilon \leq C_d\  \varepsilon^{-\frac{2(p-1)}{2-(n+d-1)(p-1)}}, \qquad \qquad  \forall d\in (\mu, \frac{p+1}{p-1}-n),
\end{equation}
whenever the exponent $p$ lies in the range
\begin{equation*}
1 < p < p_{\mathrm{Gla}}(n+\mu).
\end{equation*}
Here, $C_d$ is a positive constant independent of $\varepsilon$.
\end{theorem}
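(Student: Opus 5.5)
The plan is to adapt the functional method of \cite{Our3}, with a test function that stays positive whatever the sign of $\delta$. The key observation is the parameter $d>\mu$ and the macros $\U$, $\V$, $\w$ built from $\eta$ and $d$. The oscillation for $\delta<0$ comes from the mass term $\nu^2(1+t)^{-2}u$. I will not solve the adjoint ODE exactly. Instead I build a positive time factor that is a \emph{supersolution} (or subsolution, as needed) of the adjoint operator $\partial_t^2-\partial_t(\frac{\mu}{1+t}\cdot)+\frac{\nu^2}{(1+t)^2}$, of the form $(1+t)^{d}$ times a correction, and absorb the mass term as an error of the right sign. This is possible at the price of replacing $\mu$ by any $d>\mu$.

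The weight $(1+t)^{d}$ gives an extra damping margin $d-\mu>0$. This margin dominates the $\nu^2(1+t)^{-2}$ contribution at the level of a differential inequality; it does not require real characteristic roots. That explains why the lifespan exponent involves $n+d-1$ and why only the open range $p<p_{\mathrm{Gla}}(n+\mu)$ is obtained: choose $d$ close to $\mu$.

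Concretely, I take the space profile $\phi(x)=\int_{S^{n-1}}e^{x\cdot\omega}d\omega$, which satisfies $\Delta\phi=\phi$, and test functions $\Psi=\w(t)\phi(x)$. Here $\w$ solves a modified linear ODE with positive solutions; for instance $\w''-(\frac{d}{1+t}\w)'=\w$ or its variant with $\eta$, built from modified Bessel functions $K_\nu$ with real index determined by $d$ instead of $\sqrt\delta$. The steps are as follows.
\begin{enumerate}
\item Prove the technical lemmas: positivity of $\w$, $\U$, $\V$, the asymptotics $\w(t)\sim e^{-t}(1+t)^{d/2}$, and the bound $\int_{|x|\le t+R}\phi\,dx\le C e^{t}(1+t)^{(n-1)/2}$.
\item Plug $\Psi$ into \eqref{energysol2-bis}. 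With the functional $F(t)=\int u\Psi\,dx$, derive an inequality showing that $G(t)=\int \partial_tu\,\Psi\,dx+\dots$ is bounded below by $C\varepsilon$ plus the nonlinear integral. The mass term enters as $\nu^2(1+t)^{-2}F$, and the sign of $F$ is not known. I will handle it by combining the first identity (with $\Phi=\Psi$) and the identity for $\partial_t u$ into a first-order system. I then use an integrating factor $(1+t)^{d-\mu}$, which makes the term $\frac{\nu^2}{(1+t)^2}$ integrable against the gained decay and controlled by the main term.
\item Obtain the lower bound $\int\partial_t u\,\Psi\ge C\varepsilon$ for $t\ge t_0$.
\item Apply H\"older with the compact support: $\int|\partial_tu|^p\Psi\ge C\big(\int\partial_tu\,\Psi\big)^p(1+t)^{-(n-1)(p-1)/2 - (d)(p-1)/2}$, up to the exponentials cancelling.
\item Close with the Riccati/iteration argument: $H'(t)\ge C(1+t)^{-\frac{(n+d-1)(p-1)}{2}}H^p$ with $H(t_0)\ge C\varepsilon$. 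This blows up before $\varepsilon^{-\frac{2(p-1)}{2-(n+d-1)(p-1)}}$ exactly when $(n+d-1)(p-1)<2$, that is, when $d<\frac{p+1}{p-1}-n$. This range is nonempty and contains values $d>\mu$ iff $p<p_{\mathrm{Gla}}(n+\mu)$.
\end{enumerate}

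The main obstacle is step 2: showing that the mass term does not destroy the positive lower bound when $\delta<0$, since we have no positive exact adjoint solution. I will first try the modified equation for $\w$ with damping coefficient $d$. The resulting discrepancy $\frac{(d-\mu)}{1+t}\w'$-type terms are of the good sign because $\w'<0\cdot$ relative to $\w$. This surplus is meant to absorb $\frac{\nu^2}{(1+t)^2}\w$. The absorption is exact for large $t$, but I am uncertain whether it holds on a bounded interval $[0,t_0]$. If it fails there, I will start the argument at a large time $t_0$ depending on $d-\mu$ and $\nu$, and treat $[0,t_0]$ by continuity using the smallness of $\varepsilon$.
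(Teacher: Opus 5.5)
There is a genuine gap, and it sits exactly where you flag it. The rest of your plan matches the paper's: a positive separated test function with an artificial damping exponent $d>\mu$, a lower bound for $G(t)=\int\partial_tu\,\Psi\,dx$ by $C\varepsilon$ plus the nonlinear integral, H\"older with the weight $(1+t)^{-(n+d-1)(p-1)/2}$, and a Kato-type ODE.

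With $\Delta\phi=\phi$ (spatial rate $1$), the adjoint defect $\mathcal{L}^*_{\mu,\nu}\Psi$ in general has the good sign only for $t\gtrsim\nu^2/(d-\mu)$. Since $d$ must be taken close to $\mu$, this bad interval is long. Your fallback is to restart at a large $t_0$ and handle $[0,t_0]$ ``by continuity using the smallness of $\varepsilon$''. That does not close:
\begin{itemize}
\item To restart at $t_0$ you need $F(t_0)>0$ and a bound $\gtrsim\varepsilon$ on the relevant combination of $F(t_0)$ and $G(t_0)$.
\item Smallness of $\varepsilon$ only compares the sizes of the linear and nonlinear parts. For an arbitrary energy solution with nonlinearity $|\partial_tu|^p$ you have no a priori estimate making the nonlinear part negligible on $[0,t_0]$.
\item The wrong-sign contribution from $[0,t_0]$ is of the same order $\varepsilon$ as the data term. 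Nothing fixes the \emph{sign}, which is precisely what is in doubt when $\delta<0$.
\end{itemize}
For the same reason, your step-2 plan of controlling $\frac{\nu^2}{(1+t)^2}F$ by integrability would need an upper bound on $|F|$, which you do not have.

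The missing idea is to keep the spatial rate $\eta$, with $\Delta\phi^{\eta}=\eta^2\phi^{\eta}$, as a large free parameter. Take the explicit profile $\psi=(1+t)^{d/2}e^{-\eta t}\phi^{\eta}(x)$; no Bessel functions are needed. A direct computation gives $\mathcal{L}^*_{\mu,\nu}\psi=\mathcal{K}(t)\psi$ with
\begin{equation*}
\mathcal{K}(t)=\frac{4\nu^2+(d-2\mu)(d-2)}{4(1+t)^2}-\frac{\eta(d-\mu)}{1+t}.
\end{equation*}
Since $(1+t)^{-2}\le(1+t)^{-1}$, this is $\le0$ for \emph{all} $t\ge0$ once $\eta\ge\frac{4\nu^2+(d-2\mu)(d-2)}{4(d-\mu)}$. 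The sign depends only on $\eta(1+t)$, so this is your large-time observation moved to $t=0$ by scaling.

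With the defect nonpositive on $[0,\infty)$, positivity of $F$ follows from a first-zero argument on
\begin{equation*}
F'+\gamma F+\int_0^t\mathcal{K}F\,ds=\int_0^t\!\!\int|\partial_tu|^p\psi\,dx\,ds+\varepsilon C_1^{\eta}(f,g),\qquad \gamma=2\eta+\frac{\mu-d}{1+t}.
\end{equation*}
Here $F(0)>0$, and $C_1^{\eta}>0$ for $\eta\ge d+2$. At a first zero of $F$ the Volterra term has the favorable sign, which gives a contradiction. Dropping that term then gives $F\gtrsim\varepsilon$. Differentiating and rewriting in terms of $G$ yields $G'+\frac34\gamma G\ge\frac{\eta}{4}\varepsilon C_0^{\eta}+(\text{nonlinear terms})$, hence $G\ge C\varepsilon$ for $t\ge1$. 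From there your steps 4 and 5 go through unchanged. Your Bessel-type profile would also work once rescaled to $\eta(1+t)$ with $\eta$ large, but the explicit profile avoids any Bessel asymptotics.
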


\begin{remark} \label{Perturb1}
We observe that the parameter $d$ can be any real number satisfying $d\ \in (\mu, \frac{p+1}{p-1}-n)$.  As specified in   Theorem \ref{blowup01},  the upper bound for the lifespan $T_\varepsilon$ is strictly larger than the one established in \cite{Our3} for $\delta \ge 0$. This increase in the upper bound is a direct consequence of our analytical framework. Specifically, for large time, we treat the combined operator 
\begin{equation*}
\frac{\mu}{1+t}\partial_t u + \frac{\nu^2}{(1+t)^2}u
\end{equation*}
as a perturbation of the scale-invariant damping term $\frac{d}{1+t}\partial_t u$.
\end{remark}

\begin{remark} \label{Perturb2}
It is well-established that for $\delta > 0$, the blow-up region typically extends to the critical value $p=p_{\mathrm{Gla}}(n+\mu)$. However, since our current proofs utilize a perturbative method, the result in  Theorem \ref{blowup01} 
is restricted to the strictly subcritical range $1 < p < p_{\mathrm{Gla}}(n+\mu)$. Extending these blow-up results to the critical case $p = p_{\mathrm{Gla}}(n+\mu)$  would necessitate a more refined analysis, as the critical dynamics lie beyond the reach of the perturbative approach employed here.
\end{remark}

%
\begin{remark}\label{rem-supp}
Since $f$ and $g$ are supported on $B_{\R^n}(0,R)$, one can see that $\mbox{\rm supp}(u) \ \subset\{(x,t)\in\R^n\times[0,\infty): |x|\le t+R\}$. Consequently, one  can choose any test function $\Phi$  which is not necessarily compactly supported.
\end{remark}

%

\section{Some auxiliary results }\label{sec3}
In this subsection, we construct a family of test functions designed to establish the blow-up results.
\subsection{Construction of a family of test functions}
Let us first define the conjugate operator $\mathcal{L^*_{\mu,\nu}}$ associated with the operator  $\mathcal{L_{\mu,\nu}}$ defined in  \eqref{Equl} as:
\begin{equation}\label{cp}
\mathcal{L}^*_{\mu,\nu} \psi := \partial^2_t \psi - \Delta \psi - \frac{\partial}{\partial t} \left( \frac{\mu}{1+t} \psi \right) + \frac{\nu^2}{(1+t)^2} \psi.\end{equation}

Following the strategies in \cite{Our3, Palmieri1, Tu-Lin}, traditional approaches for the case $\delta \ge 0$ seek a solution to the equality  
\begin{equation}\label{equal}
 \mathcal{L}^*_{\mu,\nu} U^{\eta} = 0.
  \end{equation}
  By separation of variables, we define the positive test function $U^{\eta}(x,t) := \xi^{\eta}(t)\phi^{\eta}(x)$. The spatial component $\phi^{\eta}(x)$ is given by
 \begin{equation}
\label{test12}
\phi^{\eta}(x) := 
\begin{cases} 
\displaystyle \int_{\mathbb{S}^{n-1}} e^{\eta x \cdot \omega} d\omega & \text{if } n \ge 2, \vspace{0.1cm} \\ 
e^{\eta x} + e^{-\eta x} & \text{if } n = 1.
\end{cases}
\end{equation}
As introduced in \cite{YZ06}, the function $\phi^{\eta}$ satisfies the identity $\Delta \phi^{\eta} = \eta^2 \phi^{\eta}$. 
The time-dependent component $\xi^{\eta}(t)$, as discussed in \cite{Palmieri1,Tu-Lin}, is the solution of the following ordinary differential equation:
\begin{equation}\label{lambda}
\frac{d^2 \xi^{\eta}(t)}{dt^2}-\eta^2\xi^{\eta}(t)-\frac{d}{dt}\left(\frac{\mu}{1+t}\xi^{\eta}(t)\right)+\frac{\nu^2}{(1+t)^2}\xi^{\eta}(t)=0.
\end{equation}
For the case $\delta \ge 0$, the temporal component $\xi^{\eta}(t)$ is typically expressed via modified Bessel functions \cite{Palmieri1, Tu-Lin}:
\begin{equation*}\label{lmabdaK}
\xi^{\eta}(t)=(\eta t+\eta)^{\frac{\mu+1}{2}}K_{\frac{\sqrt{\delta}}{2}}(\eta(t+1)),
\end{equation*}
where $K_{\alpha}(t)$ denotes the modified Bessel function of the second kind, defined as
\begin{equation*}
K_{\alpha}(t)=\int_0^\infty\exp(-t\cosh \zeta)\cosh(\alpha \zeta)d\zeta, \quad \alpha \in \mathbb{R}.
\end{equation*}

\par

Obviously, this classical construction breaks down when $\delta < 0$.  To overcome the specific difficulties associated with the negative discriminant case, 
we propose a construction that remains valid regardless of the sign of $\delta$.
Instead of requiring a test function that satisfies the identity \eqref{equal}, the central strategy of our proof involves a relaxation of the constraints. Specifically, we construct a test function such that  the left-hand side of \eqref{equal} remains non-positive.
This relaxation facilitates the construction of a family of explicit test functions. Specifically, we define the positive function 
 $\psi_d^{\eta}(x,t)$ via the separation of variables as follows:
\begin{equation}\label{test11}
\psi_d^{\eta}(x,t) := \rho_d^{\eta}(t)\phi^{\eta}(x), \quad \eta > 0,\quad d>0,
\end{equation}
where the spatial component $\phi^{\eta}(x)$ is given by
 \eqref{test12}, and 
 the temporal factor is defined by 
\begin{equation}\label{lmabdaK}
\rho_d^{\eta}(t) = (1+t)^{\frac{d}{2}} e^{-\eta t}.
\end{equation}
The properties of this construction are summarized in the following Lemma.
\begin{lemma}[Construction of a Family of Test Functions]\label{lemtestf}
Let $d > \mu$ and let $\psi_d^{\eta}(x,t)$ be the test function defined in \eqref{test11}. 
Then $\psi_d^{\eta}$ satisfies the adjoint differential identity
\begin{equation}\label{r01}
\mathcal{L}^*_{\mu,\nu} \psi_d^{\eta} = \mathcal{K}_d^{\eta}(t)\psi_d^{\eta},
\end{equation}
 where  $\mathcal{L}^*_{\mu,\nu}$ is defined in \eqref{cp}, and 
 \begin{equation}\label{Kb}
 \mathcal{K}_d^{\eta}(t) := \frac{4\nu^2 + (d-2\mu)(d-2)}{4(1+t)^2} - \frac{\eta(d-\mu)}{1+t}.
 \end{equation}
Furthermore, there exists $\tilde{\eta} = \tilde{\eta}(d, \mu, \nu) \ge 2$ defined by
\begin{equation}\label{tildeeta}\tilde{\eta} := \max(2,\frac{4\nu^2 + (d-2\mu)(d-2)}{4(d-\mu)}),
\end{equation}
 such that 
\begin{equation}\label{neg}
\mathcal{K}_d^{\eta}(t) \le 0, \quad \forall  t \ge 0, \quad \text{for} \quad \eta \ge \tilde{\eta}.
\end{equation}
 \end{lemma}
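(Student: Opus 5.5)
The plan is a direct computation, exploiting that $\psi_d^{\eta}$ is a product $\rho_d^{\eta}(t)\phi^{\eta}(x)$. The identity $\Delta\phi^{\eta}=\eta^2\phi^{\eta}$ (recalled after \eqref{test12}) turns the Laplacian into multiplication by $\eta^2$. Hence
\[
\mathcal{L}^*_{\mu,\nu}\psi_d^{\eta}=\Big[(\rho_d^{\eta})''-\eta^2\rho_d^{\eta}-\Big(\tfrac{\mu}{1+t}\rho_d^{\eta}\Big)'+\tfrac{\nu^2}{(1+t)^2}\rho_d^{\eta}\Big]\phi^{\eta},
\]
and \eqref{r01} reduces to the scalar identity: the bracket equals $\mathcal{K}_d^{\eta}(t)\rho_d^{\eta}$.

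To verify it, I would write $s=1+t$ and use the logarithmic derivative
\[
(\rho_d^{\eta})'=\Big(\tfrac{d}{2s}-\eta\Big)\rho_d^{\eta}.
\]
Differentiating once more gives
\[
(\rho_d^{\eta})''=\Big[\Big(\tfrac{d}{2s}-\eta\Big)^2-\tfrac{d}{2s^2}\Big]\rho_d^{\eta}.
\]
For the damping term, the product rule gives
\[
-\Big(\tfrac{\mu}{s}\rho_d^{\eta}\Big)'=\Big[\tfrac{\mu}{s^2}-\tfrac{\mu}{s}\Big(\tfrac{d}{2s}-\eta\Big)\Big]\rho_d^{\eta}.
\]
Now collect terms:
\begin{itemize}
\item The $\eta^2$ from the expansion of the square cancels against $-\eta^2$ from the Laplacian. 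This cancellation is the reason for choosing the exponential factor $e^{-\eta t}$.
\item The $1/s$ terms are $-d\eta/s+\mu\eta/s=-\eta(d-\mu)/s$.
\item The $1/s^2$ terms are $\tfrac{d^2}{4}-\tfrac d2+\mu-\tfrac{\mu d}{2}+\nu^2$. This equals $\tfrac14\big(4\nu^2+(d-2\mu)(d-2)\big)$, since $(d-2\mu)(d-2)=d^2-2d-2\mu d+4\mu$.
\end{itemize}
This yields \eqref{Kb}.

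For the sign condition \eqref{neg}, set $A:=4\nu^2+(d-2\mu)(d-2)$ and factor
\[
\mathcal{K}_d^{\eta}(t)=\frac{1}{1+t}\Big[\frac{A}{4(1+t)}-\eta(d-\mu)\Big].
\]
Since $1+t\ge1$, we have $\frac{A}{4(1+t)}\le \frac{\max(A,0)}{4}$. Because $d>\mu$, the bracket is non-positive as soon as $\eta\ge \frac{\max(A,0)}{4(d-\mu)}$. The choice $\tilde\eta$ in \eqref{tildeeta} guarantees this, and the case $A<0$ is trivial. So $\mathcal{K}_d^{\eta}(t)\le0$ for all $t\ge0$ and all $\eta\ge\tilde\eta$. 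The lower bound $2$ in $\tilde\eta$ plays no role here and is presumably kept for later estimates.

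No step is a genuine obstacle. The only place needing care is the bookkeeping of the $1/s^2$ coefficients, in particular the term $-\tfrac{d}{2s^2}$ from differentiating $d/(2s)$ and the term $+\mu/s^2$ from differentiating $\mu/s$. The key structural point is that $d>\mu$ makes the $1/s$ term negative. That term dominates the possibly positive $1/s^2$ term uniformly on $t\ge0$ once $\eta$ is large, and the sign of $\delta$ never enters.
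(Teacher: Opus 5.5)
Your proposal is correct and follows the paper's proof: separation of variables with $\Delta\phi^{\eta}=\eta^2\phi^{\eta}$, a direct computation of the temporal bracket (which you carry out explicitly and which checks out), and then a sign check for $\eta\ge\tilde\eta$. Your factorization $\mathcal{K}_d^{\eta}(t)=\frac{1}{1+t}\big[\frac{A}{4(1+t)}-\eta(d-\mu)\big]$, together with the bound $\frac{A}{4(1+t)}\le \frac{\max(A,0)}{4}$, is a precise version of the paper's step ``check at $t=0$ and use monotonicity in $t$.''
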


 \begin{proof}
 By substituting the separation of variables ansatz $\psi_d^{\eta}(x,t) = \rho_d^{\eta}(t)\phi^{\eta}(x)$ into the operator \eqref{cp} and utilizing the eigenvalue relation $\Delta \phi^{\eta} = \eta^2 \phi^{\eta}$, we obtain
 \begin{equation}\label{r1}
 \mathcal{L}^*_{\mu,\nu} \psi_d^{\eta} = \left[ \frac{d^2 \rho^{\eta}_d}{dt^2} - \frac{d}{dt} \left( \frac{\mu}{1+t} \rho^{\eta}_d \right) + \left( \frac{\nu^2}{(1+t)^2} - \eta^2 \right) \rho^{\eta}_d \right] \phi^{\eta}(x).
 \end{equation}
 A direct calculation for $\rho_d^{\eta}(t) = (1+t)^{\frac{d}{2}} e^{-\eta t}$ shows that
 \begin{equation}\label{r2}
   \frac{d^2 \rho^{\eta}_d}{dt^2} - \frac{d}{dt} \left( \frac{\mu}{1+t} \rho^{\eta}_d \right) + \left( \frac{\nu^2}{(1+t)^2} - \eta^2 \right) \rho^{\eta}_d =\mathcal{K}_d^{\eta}(t) \rho^{\eta}_d(t),
 \end{equation}
  where
$ \mathcal{K}_d^{\eta}(t)$ is given by \eqref{Kb}. Clearly, by combining \eqref{test11}, \eqref{r1} and \eqref{r2}, we deduce 
\eqref{r01}. 
 
For $d > \mu$, we ensure the  negativity of $\mathcal{K}_d^{\eta}(t)$ by choosing a sufficiently large parameter $\eta$. 
Specifically, we set the threshold
\begin{equation}\label{tildeeta0}
\eta \ge \tilde{\eta} :=\max(2, \frac{4\nu^2 + (d-2\mu)(d-2)}{4(d-\mu)}),
\end{equation}
which guarantees the inequality at the initial time $t=0$.
 By the monotonicity of the terms in $t$, this condition remains valid for all $t \ge 0$, namely:
\begin{equation}\label{neg}
\mathcal{K}_d^{\eta}(t) \le 0, \quad \forall  t \ge 0, \quad \text{for} \quad \eta \ge \tilde{\eta}.
\end{equation}
This end the proof of Lemma \ref{lemtestf}.
 \end{proof}
\begin{remark}
The principal merit of this construction lies in its independence from the sign of $\delta$. Specifically, for sufficiently large $\eta$, the explicit function $\psi^{\eta}_d$ serves as a  subsolution to the operator defined  in \eqref{cp}.  Crucially, relaxing the requirement from a strict equality to a differential inequality does not affect the subsequent steps of the blow-up argument.
\end{remark}

With the family of test functions constructed in Lemma \ref{lemtestf} at hand, we adapt the strategy developed in \cite{Our3} to the present setting. By exploiting the structural properties of this family, we derive the necessary differential inequalities to establish the blow-up result. To this end, for $\eta > 0$ and $d > \mu$, we define the following  functional:
 \begin{equation}\label{wF1def}
\mathcal{F}_d^{\eta}(t) := 
 \int_{\mathbb{R}^n} u(x, t) \psi_d^{\eta}(x,t) dx.
 \end{equation}
The objective of this subsection is to derive the necessary lower estimates for $\mathcal{F}_d^{\eta}(t)$.

\subsection{Lower estimates for the functional $\U(t)$}\label{sec32}
In this subsection, we establish two sharp lower bounds for the functional $\mathcal{F}_d^{\eta}(t)$, as stated in the following lemma
\begin{lemma}
\label{F1-n}
Assume that the assumptions in Theorem \ref{blowup01} hold. Let $u$ be an energy solution of \eqref{energysol2}. Then,  for all $d>\mu$, there exists $ \eta_0(d,\mu,\nu)\ge 2$ such that  we have
\begin{equation}
\label{F1postive0}
\U(t)>0\
\quad\text{for all}\ t \in [0,T), \quad\text{for all}\ \eta\ge\eta_0,
\end{equation}
and
\begin{equation}
\label{F1postive}
\U(t)\ge \frac{
C^{\eta}_0(f,g)}{4\eta}\, \e, 
\quad\text{for all}\ t \in [1,T),\quad\text{for all}\ \eta\ge\eta_0.
\end{equation}
Here $ \eta_0:=max (d+2,\frac{4\nu^2 + (d-2\mu)(d-2)}{4(d-\mu)})$. Furthermore, the constant $C^{\eta}_0(f,g)$ is given by
\begin{equation}\label{C0fg}
C^{\eta}_0(f,g):= \int_{\R^n} \big(f(x) +g(x)\big)\phi^{\eta}(x)dx.
\end{equation}
\end{lemma}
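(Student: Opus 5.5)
The plan is to insert $\Phi=\psi_d^{\eta}$ into the integrated weak formulation \eqref{energysol2-bis}. This is admissible by Remark \ref{rem-supp}, since $u$ has support in the cone $|x|\le t+R$. We then reduce the resulting identity to a first-order linear differential inequality for $\U$, which we solve with an integrating factor.

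\textbf{Step 1: the differential inequality.} Since $\psi_d^{\eta}=\rho_d^{\eta}\phi^{\eta}$ with $\rho_d^{\eta}=(1+t)^{d/2}e^{-\eta t}$, we have
\[
\partial_t\psi_d^{\eta}=\Big(\frac{d}{2(1+t)}-\eta\Big)\psi_d^{\eta}.
\]
Hence $\int \partial_t u\,\psi_d^{\eta}\,dx=\U'(t)-\big(\frac{d}{2(1+t)}-\eta\big)\U(t)$. The first line of \eqref{energysol2-bis} then equals
\[
\U'(t)+\Big(2\eta+\frac{\mu-d}{1+t}\Big)\U(t).
\]
By Lemma \ref{lemtestf}, the space-time term on the left equals $\int_0^t \mathcal{K}_d^{\eta}(s)\U(s)\,ds$.

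On the right-hand side, the data term is
\[
\varepsilon\int_{\R^n}\Big[\big(\eta+\mu-\tfrac d2\big)f+g\Big]\phi^{\eta}\,dx .
\]
For $\eta\ge d+2$ we have $\eta+\mu-\frac d2\ge 1$, so this term is at least $\varepsilon C_0^{\eta}(f,g)$. Write $a(t):=2\eta+\frac{\mu-d}{1+t}$ and drop the nonnegative term $\int_0^t\!\int|\partial_t u|^p\psi_d^{\eta}$. We obtain
\begin{equation*}
\U'(t)+a(t)\U(t)\ \ge\ \varepsilon C_0^{\eta}(f,g)+\int_0^t\big(-\mathcal{K}_d^{\eta}(s)\big)\U(s)\,ds .
\end{equation*}
For $\eta\ge\eta_0\ge\tilde\eta$, \eqref{neg} gives $-\mathcal{K}_d^{\eta}\ge0$.

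\textbf{Step 2: positivity, the main obstacle.} The memory term has the good sign only if $\U\ge0$ on $[0,t]$, and $u$ itself has no a priori sign. I would handle this with a continuity (bootstrap) argument.

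We have $\U(0)=\varepsilon\int f\phi^{\eta}\,dx\ge0$. On any interval $[0,t^*]$ where $\U\ge0$, the memory term can be dropped, which gives $\U'+a\U\ge\varepsilon C_0^{\eta}>0$. Multiply by $e^{A(t)}$, where $A(t)=2\eta t+(\mu-d)\ln(1+t)$, and integrate. This yields
\[
\U(t)\ \ge\ e^{-A(t)}\U(0)+\varepsilon C_0^{\eta}\int_0^t e^{-(A(t)-A(s))}\,ds,
\]
which is strictly positive for $t\in(0,t^*]$. Note also $\U'(0)\ge \varepsilon C_0^{\eta}>0$, so $\U>0$ just after $0$. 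If $\U$ had a first zero $t^*>0$, the bound above would contradict $\U(t^*)=0$; hence $\U>0$ on $(0,T)$. Strict positivity at $t=0$ holds when $f\not\equiv0$; otherwise one states \eqref{F1postive0} for $t>0$, which is all that is needed later.

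\textbf{Step 3: the quantitative bound.} Positivity is now known on all of $[0,T)$, so the integrated inequality holds for every $t$. Since $d>\mu$,
\[
A(t)-A(s)=2\eta(t-s)-(d-\mu)\ln\frac{1+t}{1+s}\ \le\ 2\eta(t-s).
\]
Therefore
\[
\U(t)\ \ge\ \varepsilon C_0^{\eta}\int_0^t e^{-2\eta(t-s)}\,ds=\frac{\varepsilon C_0^{\eta}}{2\eta}\big(1-e^{-2\eta t}\big).
\]
For $t\ge1$ and $\eta\ge2$ we have $e^{-2\eta t}\le\frac12$, which gives \eqref{F1postive}.
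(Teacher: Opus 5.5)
Your proposal is correct and follows essentially the same route as the paper: testing with $\psi_d^{\eta}$ to get $\frac{d}{dt}\U+\gamma(t)\U\ge\varepsilon C_0^{\eta}(f,g)-\int_0^t\mathcal{K}_d^{\eta}(s)\U(s)\,ds$ for $\eta\ge d+2$, a first-zero continuity argument using $\mathcal{K}_d^{\eta}\le 0$ for positivity, and the integrating factor $e^{2\eta t}(1+t)^{\mu-d}$ for the lower bound (your estimate $A(t)-A(s)\le 2\eta(t-s)$ on all of $[0,t]$ replaces the paper's restriction to $[t/2,t]$ and is equally valid). One small slip: $\U'(0)\ge\varepsilon C_0^{\eta}(f,g)$ is false when $f\not\equiv 0$, since $\U'(0)=\varepsilon\int_{\R^n}\big(g-(\eta-\frac d2)f\big)\phi^{\eta}\,dx$; however, you only need it when $\U(0)=0$, i.e.\ $f\equiv0$, where it does hold, so the argument is unaffected.
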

\begin{proof} 

Let $d > \mu$, $\eta > 0$, and $t \in [0, T)$. We consider the test function $\psi_{d}^{\eta}$ defined by the separation of variables
\begin{equation}\label{lk}\psi_{d}^{\eta}(x, t) = \rho_{d}^{\eta }(t) \phi^{\eta}(x),
\end{equation}
where the temporal component $\rho_{d}^{\eta }(t)$ and the spatial component $\phi^{\eta}(x)$ are as defined in \eqref{lmabdaK} and \eqref{test12}, respectively. By substituting $\psi_{d}^{\eta}$ into the weak formulation \eqref{energysol2-bis} and utilizing the identity \eqref{r01}, we obtain the following integral equality:
\begin{equation} \label{e0}
\begin{aligned}
&\int_{\mathbb{R}^n} \left[ \partial_t u(x,t) \psi_d^{\eta}(x,t)- u(x,t) \partial_t \psi_d^{\eta}(x,t)+ \frac{\mu}{1+t} u(x,t) \psi_d^{\eta}(x,t)\right] dx \\
&\quad + \int_0^t\mathcal{K}_{d}(s) \int_{\mathbb{R}^n} u(x,s) \psi_d^{\eta}(x,s)dx \, ds = 
\int_{0}^{t} \int_{\mathbb{R}^n} |\partial_tu(x,s)|^p \psi^{\eta}_{d}(x,s) \, dx \, ds\\
&\quad  + \varepsilon \int_{\mathbb{R}^n} \left[ \left( \mu f(x) + g(x) \right) \psi_d^{\eta}(x,0) - f(x) \partial_t \psi_d^{\eta}(x,0) \right] dx.
\end{aligned}
\end{equation}
It is worth noting that, although the test function $\psi_{d}^{\eta}$ lacks compact support with respect to the spatial variable, the validity of \eqref{e0} is ensured by the fact that the solution $u(\cdot, t)$ has compact support in $\mathbb{R}^n$ for every $t \in [0, T)$. More precisely, this can be rigorously justified by replacing $\psi_{d}^{\eta}$ with the localized test function $\psi_{d}^{\eta} \chi$, where $\chi \in C_0^\infty(\mathbb{R}^n)$ denotes a smooth cut-off function satisfying $\chi \equiv 1$ on $\text{supp}(u)$.\\
From the definition of $\psi_{d}^{\eta}(x, t)$ in \eqref{lk} and \eqref{lmabdaK}, it follows that:
\begin{equation}\label{01}
{\partial_ t} \psi^{\eta}_d(x,t) =  \frac{d}{2(1+t)} \psi^{\eta}_d(x,t)- \eta \psi^{\eta}_d(x,t).
\end{equation}
Evaluating this at $t=0$, and exploiting the fact $\rho^{\eta}(0)=1$, we have
\begin{equation}\label{02}
{\partial_ t} \psi^{\eta}_{d}(x,0) =  (\frac{d}{2} - \eta) \psi^{\eta}_{d}(x,0)
=(\frac{d}{2} - \eta) \phi^{\eta}(x).
\end{equation}
Substituting \eqref{01} and \eqref{02} into \eqref{e0}, we arrive at the following identity
\begin{equation}
\label{eq09}
\begin{aligned}
\int_{\mathbb{R}^n} \left[ \partial_tu(x,t) + \eta u(x,t) +\frac{2\mu-d}{2(1+t)}u(x,t)\right] \w(x,t) \, dx\qquad\qquad\qquad\qquad\qquad\qquad\qquad\\
 + \int_0^t \mathcal{K}_{d}(s)  \int_{\mathbb{R}^n} u(x,s) \w(x,s) \, dx \, ds 
= \int_{0}^{t} \int_{\mathbb{R}^n} |\partial_tu(x,s)|^p \psi^{\eta}_{d}(x,s) \, dx \, ds
+\varepsilon C^{\eta}_1(f,g),
\end{aligned}
\end{equation}
where 
\begin{equation}\label{Cfg}
C^{\eta}_1(f,g)= (\eta+\frac{2\mu-d}{2})\int_{\R^n} f(x)\phi^{\eta}(x)dx +
\int_{\R^n}g(x)\phi^{\eta}(x)dx.
\end{equation}
 In view of the definition of $\mathcal{F}_d^{\eta}(t)$ in \eqref{wF1def} and the identity \eqref{01}, a direct calculation yields
\begin{equation}\label{deriv}
\int_{\mathbb{R}^n}  \partial_tu(x,t)  \w(x,t) \, dx
=\frac{d}{dt} \U (t)+\Big(\eta-\frac{d}{2(1+t)}\Big)\U(t).
\end{equation}
By substituting \eqref{deriv} into \eqref{eq09}, we obtain the following differential-integral identity:
\begin{equation}
\begin{array}{l}\label{eq6}
\d \frac{d}{dt}\U(t)+
\gamma(t)\U(t)
+ \int_0^t \mathcal{K}_{d}(s)  \U(s) \, ds  
= \int_{0}^{t} \int_{\mathbb{R}^n} |\partial_tu(x,s)|^p \psi^{\eta}_{d}(x,s) \, dx \, ds
+\varepsilon C^{\eta}_1(f,g),
\end{array}
\end{equation}
where the time-dependent coefficient $\gamma(t)$ is defined by
\begin{equation}\label{gamma}
\gamma(t) := 2\eta + \frac{\mu - d}{1+t}.
\end{equation}
Taking into account the non-negativity of the nonlinear term, we multiply \eqref{eq6} by the integrating factor $\Gamma(t):=e^{2\eta t}(1+t)^{\mu-d}$ and integrate the resulting inequality over $[0,t]$ to obtain
\begin{equation}\label{wwF1+}
\d \U(t)+\frac1{\Gamma(t)}\int_0^t\Gamma(s)\int_{0}^{s}  \mathcal{K}_{d}(\tau)\U(\tau) d\tau ds
\ge \frac{\U(0)}{\Gamma(t)}+\frac{{\e}C^{\eta}_1(f,g)}{\Gamma(t)}\int_0^t  \Gamma(s)ds.
\end{equation}
Observing that $\d \U(0)=\ep \int_{\R^n}f(x) \phi^{\eta}(x)dx>0$, we further note if  $\eta  \ge d + 2$, it follows that:
\begin{equation}\label{Cfg0}
C^{\eta}_1(f,g)\ge \int_{\R^n} \big(f(x) +g(x)\big)\phi^{\eta}(x)dx=C^{\eta}_0(f,g)>0.
\end{equation}
Consequently, by virtue of \eqref{wwF1+}, it follows that:
\begin{equation}\label{wF1+}
\d \U(t)+\frac1{\Gamma(t)}\int_0^t\Gamma(s)\int_{0}^{s}  \mathcal{K}_{d}(\tau)\U(\tau) d\tau ds>0, \qquad \forall\  t \in [0, T), \qquad \eta  \ge d  + 2.
\end{equation}
By Lemma \ref{lemtestf}, for any $\eta \ge \tilde{\eta}$,  we have
\begin{equation}\label{negb}
\mathcal{K}_d^{\eta}(t) \le 0, \quad \forall  t \ge 0,
\end{equation}
where  $\tilde{\eta}$ is given by  \eqref{tildeeta}.
By invoking \eqref{wF1+} and the fact that $\U(0) > 0$, we deduce that:
\begin{equation}\label{pos}
\U(t) > 0, \quad \forall  t \in [0,T), \qquad \eta  \ge \eta_0,
\end{equation}
where $\eta_0 := \max(d+2,\frac{4\nu^2 + (d-2\mu)(d-2)}{4(d-\mu)})$.
Indeed, given that $\U(0) > 0$,   and the mapping $t \mapsto \U(t)$ is continuous, let us assume for the sake of contradiction that there exists a first time $t_0 \in (0, T)$ such that $\U(t_0) = 0$. However, applying a comparison argument based on \eqref{wF1+}, and exploiting \eqref{negb} yields a contradiction at $t_0$. 
Thus, $\U(t)$ remains strictly positive for all $t \in [0, T)$, and hence \eqref{F1postive0} holds.\\

Now, substituting \eqref{Cfg0}, \eqref{F1postive0} and \eqref{negb} into the identity \eqref{eq6}, we obtain:
\begin{equation}
\label{eq6b}
\d \frac{d}{dt}\U(t)+\gamma(t)\U(t)\ge \int_{0}^{t} \int_{\mathbb{R}^n} |\partial_tu(x,s)|^p \psi^{\eta}_{d}(x,s) \, dx \, ds
 +\e \, C^{\eta}_0(f,g), >0, \quad \forall\  t \in [0, T), \quad \eta  \ge \eta_0.
\end{equation}
Since the nonlinear tem  is non-negative, we multiply the inequality \eqref{eq6b} by the integrating factor $\Gamma(t) = e^{2\eta t}(1+t)^{\mu-d}$ and integrate the result over the interval $[0, t]$. This leads to:
\begin{equation}\label{wF1++}
\d \U(t)
\ge \frac{\U(0)}{\Gamma(t)}+\frac{{\e}C^{\eta}_0(f,g)}{\Gamma(t)}\int_0^t  \Gamma(s)ds\quad \forall\  t \in [0, T), \quad \eta  \ge \eta_0.
\end{equation}
Observing that $\d \U(0)=\ep \int_{\R^n}f(x) \phi^{\eta}(x)dx>0$, the estimate \eqref{wF1++} yields
\begin{align}\label{est-G1-1}
\U(t)
\ge {\e}C^{\eta}_0(f,g)(1+t)^{d-\mu}e^{-2\eta t}\int_{t/2}^t(1+t)^{\mu-d}e^{2\eta s}ds\quad \forall\  t \in [0, T), \quad \eta  \ge \eta_0.
\end{align}
Consequently, we obtain
\begin{align}\label{est-G1-2}
 \U(t)
\ge \e C^{\eta}_0(f,g)e^{-2\eta t}\int^t_{t/2}e^{2\eta s}ds\ge \frac{\e C^{\eta}_0(f,g)}{2\eta}(1-e^{-\eta t}), \quad \forall\  t \in [0, T), \quad \eta  \ge \eta_0.
\end{align}
Finally, taking into account that
\begin{equation}
e^{-\eta t}\le \frac1{\eta}\le \frac12, 
\quad \forall\  t \in [1, T), \quad \eta  \ge \eta_0,
\end{equation}
 we obtain \eqref{F1postive}. This ends the proof of Lemma \ref{F1-n}.
\end{proof}
\begin{remark}
It is worth noting that the proof of Lemma \ref{F1-n} is independent of the specific power-type structure of the nonlinearities, provided they remain non-negative. Since the argument relies on the asymptotic behavior of the linear operator, the result can be extended to any nonlinearity $F(u, \partial_t u) \ge 0$.
\end{remark}

By virtue of the estimates established in Lemma \ref{F1-n}, we now derive the necessary lower bounds for the functional
\begin{equation}\label{wF2def}
\V (t) := \int_{\mathbb{R}^n} \partial_t u(x, t) \psi_d^{\eta}(x,t) dx.
\end{equation}
These estimates ensure the coercive growth required for the subsequent blow-up analysis.

\subsection{Lower estimates for the functional $\V(t)$}\label{sec33}
In this subsection, we establish two sharp lower bounds for the functional $\V(t)$, as stated in the following lemma.
\begin{lemma}
\label{F21}
Assume that the assumptions in Theorem \ref{blowup01} hold. Let $u$ be an energy solution of \eqref{energysol2}. Then,  for all $d>\mu$, there exists $ \eta_1(d,\mu,\nu)\ge 2$ such that  we have
\begin{equation}
\label{F2postive0}
\V(t)\ge 0, 
\quad\text{for all}\ t \in [0,T)\quad\text{for all}\ \eta\ge\eta_1,
\end{equation}
and
\begin{equation}
\label{F2postive}
\V(t)\ge \frac{
C^{\eta}_0(f,g)}{18}\, \e, 
\quad\text{for all}\ t \in [1,T)\quad\text{for all}\ \eta\ge\eta_1,
\end{equation}
where
 \begin{equation}\label{eta1}
\eta_1:=max (2d+2\mu+2\nu+2,\frac{4\nu^2 + (d-2\mu)(d-2)}{4(d-\mu)}),
\end{equation} 
and the constant $C^{\eta}_0(f,g)$ is given by
\begin{equation}\label{C0fg2}
C^{\eta}_0(f,g)= \int_{\R^n} \big(f(x) +g(x)\big)\phi^{\eta}(x)dx.
\end{equation}
\end{lemma}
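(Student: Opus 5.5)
The plan is to turn the integral identity \eqref{eq09} (equivalently \eqref{eq6}) into a first-order differential inequality for $\V$. Its source term should be a positive multiple of $\U$, so that Lemma \ref{F1-n} can be fed in.

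\textbf{Step 1: a differential identity for $\V$.} Write $N(t):=\int_0^t\int_{\R^n}|\partial_t u|^p\psi_d^\eta\,dx\,ds$. By \eqref{deriv},
\[
\V(t)=\frac{d}{dt}\U(t)+\Big(\eta-\frac{d}{2(1+t)}\Big)\U(t).
\]
Identity \eqref{eq09} reads
\[
\V+\Big(\eta+\frac{2\mu-d}{2(1+t)}\Big)\U+\int_0^t\mathcal K^\eta_d\U\,ds=N(t)+\e C_1^\eta(f,g).
\]
I will differentiate this in $t$; it holds in the distributional sense, and $N$ is absolutely continuous. I then replace $\frac{d}{dt}\U$ by $\V-(\eta-\frac{d}{2(1+t)})\U$. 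The coefficient of $\U$ is
\[
-\Big(\eta+\tfrac{2\mu-d}{2(1+t)}\Big)\Big(\eta-\tfrac{d}{2(1+t)}\Big)-\tfrac{2\mu-d}{2(1+t)^2}+\mathcal K^\eta_d(t).
\]
Using \eqref{Kb}, the $\eta/(1+t)$ terms cancel, since $\eta(\mu-d)+\eta(d-\mu)=0$. The $(1+t)^{-2}$ terms reduce to $-\nu^2$, since $(d-2\mu)(d-2)$ appears once with each sign. This gives the clean identity
\[
\frac{d}{dt}\V(t)+\Big(\eta+\frac{2\mu-d}{2(1+t)}\Big)\V(t)=\Big(\eta^2-\frac{\nu^2}{(1+t)^2}\Big)\U(t)+\int_{\R^n}|\partial_tu|^p\psi_d^\eta\,dx .
\]
I expect the main obstacle to be this algebra together with the justification of the differentiation. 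If the regularity is a concern, I would instead work directly with the integrated form, that is, multiply by the integrating factor before integrating.

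\textbf{Step 2: positivity.} Take $\eta\ge\eta_1\ge\max(\eta_0,2\nu+2)$. Then $\eta^2-\nu^2/(1+t)^2\ge\eta^2-\nu^2\ge\frac34\eta^2$, and $\U>0$ by \eqref{F1postive0}. Multiplying by $\Theta(t):=e^{\eta t}(1+t)^{\frac{2\mu-d}{2}}$ and integrating gives
\[
\V(t)\ge\frac{\V(0)}{\Theta(t)}+\frac{3\eta^2}{4\Theta(t)}\int_0^t\Theta(s)\U(s)\,ds .
\]
Since $\V(0)=\e\int g\phi^\eta\ge0$, this yields \eqref{F2postive0}.

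\textbf{Step 3: the quantitative bound for $t\ge1$.} I restrict the integral to $s\in[t/2,t]$. There the ratio $\Theta(s)/\Theta(t)$ is at least $e^{-\eta(t-s)}$ times $\big(\frac{1+s}{1+t}\big)^{\frac{2\mu-d}{2}}$. If $2\mu-d\le0$ this power factor is $\ge1$; otherwise it is $\ge2^{-\frac{2\mu-d}{2}}\ge2^{-\mu}$.

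For $\U(s)$ on this range I use \eqref{est-G1-2}, namely $\U(s)\ge\frac{\e C_0^\eta}{2\eta}(1-e^{-\eta s})$. Since $s\ge1/2$ and $\eta\ge2$, this gives $\U(s)\ge\frac{\e C_0^\eta}{2\eta}(1-e^{-1})$.

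Also $\int_{t/2}^t e^{-\eta(t-s)}ds\ge\frac{1-e^{-\eta/2}}{\eta}\ge\frac{1-e^{-1}}{\eta}$. Combining, $\V(t)\ge c\,\e C_0^\eta$ with an explicit numerical constant $c$. The factor $\eta^2$ cancels against $\frac1{2\eta}\cdot\frac1\eta$, and the largeness condition $\eta\ge 2d+2\mu+2\nu+2$ in \eqref{eta1} absorbs the $2^{-\mu}$-type loss. Checking that $c\ge1/18$ is routine bookkeeping; if that constant is awkward, I would shrink the interval to $[t-\frac12,t]$, where the power factor is close to $1$ uniformly.
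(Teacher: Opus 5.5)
\textbf{Verdict.} Your route is different from the paper's. Steps 1 and 2 are correct, but Step 3 as written does not reach the constant $\frac1{18}$; the fix is short and given below.

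\textbf{Steps 1 and 2.} Your identity
\[
\frac{d}{dt}\V(t)+\gamma_1(t)\V(t)=\Big(\eta^2-\frac{\nu^2}{(1+t)^2}\Big)\U(t)+\int_{\R^n}|\partial_tu|^p\psi_d^\eta\,dx,\qquad \gamma_1(t)=\eta+\frac{2\mu-d}{2(1+t)},
\]
is correct. It also follows directly from $\frac{d}{dt}\V=\int\partial_t^2u\,\psi_d^\eta+\int\partial_tu\,\partial_t\psi_d^\eta$ together with $\Delta\phi^\eta=\eta^2\phi^\eta$. The differentiation is legitimate because \eqref{eq09} exhibits $\V$ as an absolutely continuous function. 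Step 2 then gives \eqref{F2postive0}.

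\textbf{The gap in Step 3.} In your chain the powers of $\eta$ cancel exactly. So for $2\mu>d$ what you actually obtain is
\[
\V(t)\ge\frac38\big(1-e^{-1}\big)^2\,2^{-\frac{2\mu-d}{2}}\,\e C_0^\eta .
\]
This falls below $\frac1{18}\e C_0^\eta$ once $2\mu-d>3$, for example $\mu=4$, $d=4.5$.
\begin{itemize}
\item No factor of $\eta$ is left over, so the appeal to $\eta\ge 2d+2\mu+2\nu+2$ absorbs nothing as written.
\item The fallback interval $[t-\frac12,t]$ only improves $2^{-\frac{2\mu-d}{2}}$ to $(\frac34)^{\frac{2\mu-d}{2}}$. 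That factor is uniform in $t$ but still degenerates as $\mu\to\infty$.
\end{itemize}

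\textbf{The fix.} Let $\eta$ dominate the unfavourable part of $\gamma_1$ itself. Since $d>\mu$ and $\eta\ge\eta_1\ge 2\mu$, we have $\gamma_1(\tau)\le\eta+\frac\mu2\le\frac54\eta$. Hence $\Theta(s)/\Theta(t)=\exp(-\int_s^t\gamma_1)\ge e^{-\frac54\eta(t-s)}$. Using \eqref{est-G1-2} on $[t/2,t]$, for $t\ge1$,
\[
\V(t)\ge\frac{3\eta^2}{4}\cdot\frac{(1-e^{-1})\,\e C_0^\eta}{2\eta}\cdot\frac{4(1-e^{-5/4})}{5\eta}=\frac{3}{10}\big(1-e^{-1}\big)\big(1-e^{-5/4}\big)\,\e C_0^\eta>\frac{\e C_0^\eta}{8}.
\]
Localizing to $[t-\frac1\eta,t]$ also works. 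With this correction your argument proves the lemma.

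\textbf{Comparison with the paper.} The paper never isolates your exact identity. Instead it:
\begin{itemize}
\item rewrites the damping as $\frac34\gamma(t)$;
\item splits off $\Sigma(t)=\big(\frac\eta2-\frac{\mu+d}{4(1+t)}\big)\big(\V+\gamma_1\U\big)$;
\item uses the integrated identity \eqref{v01}, $\V+\gamma_1\U\ge N(t)+\e C_0^\eta$, as the source.
\end{itemize}
This yields $\frac{d}{dt}\V+\frac34\gamma\V\ge\frac{\eta\e}{4}C_0^\eta+\frac\eta4N(t)+N'(t)$.

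Your exact identity shows that the paper's $\lambda$ has a slip in its $(1+t)^{-2}$ coefficient. It is off by $\frac{(d-2\mu)(d-2)}{2(1+t)^2}$. This is harmless, since the correct $\lambda$ is still nonnegative for $\eta\ge\eta_1$.

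The paper's detour buys two things:
\begin{itemize}
\item The $(1+t)^{-1}$ part of $\frac34\gamma$ is $\frac{3(\mu-d)}{4(1+t)}<0$. So the power in its integrating factor can simply be dropped, and $\frac1{18}$ comes out with no dependence on $\mu$. This is exactly the issue your Step 3 runs into.
\item The memory term $\frac\eta4 N(t)$ survives. Section 4 needs it to dominate $\frac34\gamma L_d^\eta$.
\end{itemize}
Your identity is cleaner for the lemma itself. However, it keeps only $N'(t)$, so reusing it in the proof of Theorem \ref{blowup01} would take extra work.
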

\begin{proof}
Let $d > \mu$, $\eta > \eta_1$, and $t \in [0, T)$. 
 we consider the functionals  $\U(t)$ and $\V(t)$ as defined  in \eqref{wF1def} and \eqref{wF2def}, respectively.
 In view of \eqref{01}, we obtain the identity
  \begin{equation}\label{deriv2}
\V(t) = \frac{d \U(t)}{dt} + \left( \eta - \frac{d}{2(1+t)} \right) \U(t).
\end{equation}
Substituting \eqref{deriv2} into \eqref{eq6}, we conclude
\begin{equation}
\begin{array}{l}\label{eq5bis}
\d \V(t)+
\gamma_1(t)\U(t)
+ \int_0^t \mathcal{K}_{d}(s)  \U(s) \, ds  
= \int_{0}^{t} \int_{\mathbb{R}^n} |\partial_tu(x,s)|^p \psi^{\eta}_{d}(x,s) \, dx \, ds
+\varepsilon C^{\eta}_1(f,g),
\end{array}
\end{equation}
 where the time-dependent coefficient $\gamma(t)$ is defined by
\begin{equation}\label{gamma1}
\gamma_1(t) := \eta + \frac{2\mu - d}{2(1+t)}.
\end{equation}
Now, taking the time-derivative of the  equation \eqref{eq5bis}, we infer that
\begin{equation}\label{F1+bis}
\d \frac{d \V}{dt}(t)+
\gamma_1(t)
\frac{d \U}{dt}(t)-\frac{2\mu-d}{2(1+t)^2}\U(t) +  \mathcal{K}_{d}(t)  \U(t)  
=\int_{\mathbb{R}^n} |\partial_tu(x,t)|^p \psi^{\eta}_{d}(x,t) \, dx. 
\end{equation}
Employing     \eqref{deriv2}, the equation  \eqref{F1+bis} yields
\begin{equation}\label{F1+bisb}
\d \frac{d \V}{dt}(t)+
\gamma_1(t) \V(t) 
\d =\int_{\mathbb{R}^n} |\partial_tu(x,t)|^p \psi^{\eta}_{d}(x,t) \, dx
+\left[(\eta-\frac{d}{2(1+t)})
\gamma_1(t)
+\frac{2\mu-d}{2(1+t)^2}-\mathcal{K}_{d}(t)\right]\U(t).
\end{equation}
By combining the definitions of $\gamma(t)$ and $\gamma_1(t)$ from \eqref{gamma} and \eqref{gamma1} with the identity \eqref{F1+bisb}, we conclude that
\begin{equation}\label{G2+bis3}
\d \frac{d \V}{dt}(t)+\frac{3
\gamma(t)}{4}\V(t)=
\int_{\mathbb{R}^n} |\partial_tu(x,t)|^p \psi^{\eta}_{d}(x,t) \, dx
+\lambda(\eta,t)  \U(t)+\Sigma(t),
\end{equation}
where 
\begin{equation}\label{sigma1-exp}
\begin{array}{rl}
\d \lambda(\eta,t) :=&\frac{\eta^2}2+\frac{2d-\mu}{4(1+t)}\eta-\frac{\nu^2}{(1+t)^2}+\frac{(d-2\mu)(3d-\mu-8)}{
8(1+t)^2},\\
\Sigma(t):=&\d \Big(\frac{\eta}{2}-\frac{\mu+d}{4(1+t)}\Big)\Big(\V(t)+\gamma_1(t)\U(t)\Big).
\end{array} 
\end{equation}
 Using the fact that $d > \frac{\mu}{2}$, it follows that for all $t \ge 0$:
\begin{equation}
\lambda(\eta,t) \ge \frac{\eta^2}{2} -  \nu^2-d\ge\frac{\eta^2}{2} -(d+\nu+1)^2 .
\end{equation}
Consequently, by invoking the positivity of $\U(t)$ from \eqref{F1postive0} and choosing $\tilde{\eta}_0 = \max(\eta_0, 2(d + \nu+1))$, we ensure that $\lambda(\eta,t) \ge 0$. We thus conclude that:
\begin{equation}\label{sigma2}
\lambda(\eta,t)  \U(t) \ge 0, \quad \forall \ t  \in [0,T), \quad \eta\ge \tilde \eta_0. 
\end{equation}
Moreover,
by substituting \eqref{negb}, \eqref{F1postive0}, and \eqref{Cfg0} into \eqref{eq5bis}, we obtain
 \begin{equation}
\label{v01}
\V(t)+
\gamma_1(t)\U(t)
\ge  \int_{0}^{t} \int_{\mathbb{R}^n} |\partial_tu(x,s)|^p \psi^{\eta}_{d}(x,s) \, dx \, ds+
\varepsilon C^{\eta}_0(f,g),\quad \forall \ t  \in [0,T), \quad \eta\ge \tilde \eta_0.
\end{equation}
Therefore, by choosing  $\eta\ge \eta_1:= \max (\tilde \eta_0,d+\mu)$, we have 
\begin{equation}\label{sigma1}
\d \Sigma(t) \ge \frac{\eta}{4}  \int_{0}^{t} \int_{\mathbb{R}^n} |\partial_tu(x,s)|^p \psi^{\eta}_{d}(x,s) \, dx \, ds+ \frac{\eta \e}{4} \, C^{\eta}_0(f,g), \quad  \quad \forall \ t  \in [0,T), \quad \eta\ge  \eta_1. 
\end{equation}
Gathering all the above results, namely \eqref{G2+bis3}, \eqref{sigma2} and \eqref{sigma1}, we end up with the following estimate
\begin{eqnarray}\label{G2+bis41}
\d \frac{d \V}{dt}(t)+\frac{3
\gamma(t)}{4}\V(t) \ge \d \frac{\eta \e}{4} \, C^{\eta}_0(f,g)+  
+\frac{\eta}{4}
\  \int_{0}^{t} \int_{\mathbb{R}^n} |\partial_tu(x,s)|^p \psi^{\eta}_{d}(x,s) \, dx \, ds\nonumber\\
+\int_{\mathbb{R}^n} |\partial_tu(x,t)|^p \psi^{\eta}_{d}(x,t) \, dx, \quad  \quad \forall \ t  \in [0,T), \quad \eta\ge  \eta_1.
\end{eqnarray}
At this level, we can eliminate  the nonlinear terms\footnote{ In fact, for a subsequent use in the proof of the main result, we choose here to keep the nonlinear terms up to this step in our computations. Otherwise,  omitting  the nonlinear terms can be done earlier in the proof of this lemma.} and we write 
\begin{equation}\label{G2+bis4}
\begin{array}{rcl}
\d \frac{d \V
}{dt}(t)+\frac{3
\gamma(t)}{4}\V(t) &\ge& \d \frac{\eta\e }{4} \, C^{\eta}_0(f,g), \quad  \quad \forall \ t  \in [0,T), \quad \eta\ge  \eta_1. 
\end{array}
\end{equation}
Integrating \eqref{G2+bis4} over $(0,t)$ after multiplication by  
$(1+t)^{\frac{3(\mu-d)}{4}}e^{\frac{3\eta t}2 }$, we obtain
\begin{eqnarray}\label{fgr}
 \V(t)
\ge 
\frac{\eta \e}4C^{\eta}_0(f,g)
(1+t)^{\frac34(d-\mu)}e^{-\frac32 \eta t}
\int_{0}^t
(1+s)^{\frac34(\mu-d)}e^{\frac32 \eta s}ds\nonumber\\
+\V(0)(1+t)^{\frac34(d-\mu)}e^{-\frac32 \eta t}\quad  \quad \forall \ t  \in [0,T), \quad \eta\ge  \eta_1.
\end{eqnarray}
Observing that $\d \V(0)=\ep \int_{\R^n}g(x) \phi^{\eta}(x)dx>0$, the estimate \eqref{fgr} yields
\begin{equation}\label{ghr1}
\V(t)\ge 
\frac{\eta \e}4C^{\eta}_0(f,g)(1+t)^{\frac34(d-\mu)}e^{-\frac32 \eta t}
\int_{0}^t(1+s)^{\frac34(\mu-d)}e^{\frac32 \eta s}ds\quad  \quad \forall \ t  \in [0,T), \quad \eta\ge  \eta_1,
\end{equation}
and hence \eqref{F2postive0} follows. Furthermore, in view of \eqref{ghr1}, we obtain
\begin{equation}\label{ghr2}
\V(t)\ge 
\frac{\eta \e}4C^{\eta}_0(f,g)e^{-\frac32 \eta t}
\int_{t/2}^te^{\frac32 \eta s}ds\ge \frac{\e C^{\eta}_0(f,g)}{6}(1-e^{-\frac{3\eta t}4}),\quad  \quad \forall \ t  \in [0,T), \quad \eta\ge  \eta_1.
\end{equation}
Finally, taking into account that
\begin{equation}\label{bb}
e^{-\frac{3\eta t}4}\le \frac4{3\eta}\le \frac23, 
\quad \forall\  t \in [1, T), \quad \eta  \ge \eta_1.
\end{equation}
It follows from \eqref{ghr2} and \eqref{bb} that \eqref{F2postive} holds, which concludes the proof of Lemma \ref{F21}.
\end{proof}

\par

Before proving  Theorem \ref{blowup01}, 
we recall a technical estimate for $\phi^{\eta}$ that will be essential for our analysis.
\begin{lemma}[\cite{YZ06}] \label{lem1}
Let $r > 1$. There exists a constant $ C(n, R, \eta, r) > 0$ such that, for all $t \ge 0$,
\begin{equation}
\label{psi}
\int_{|x| \le t+R} \left( \phi^{\eta}(x) \right)^r dx \le C(n,R,\eta,r) e^{r \eta t} (1+t)^{\frac{(n-1)(2-r)}{2}}.
\end{equation}
\end{lemma}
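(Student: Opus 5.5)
The statement to prove is Lemma \ref{lem1}, the estimate of $\int_{|x|\le t+R}(\phi^{\eta})^r\,dx$. The plan has three steps: establish a pointwise bound on $\phi^{\eta}$, insert it into the integral in polar coordinates, and then estimate a one-dimensional integral in the radial variable.

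\textbf{Pointwise bound.} For $n\ge2$ we have $\phi^{\eta}(x)=\int_{\mathbb{S}^{n-1}}e^{\eta|x|\omega_1}d\omega$, which is radial. Writing $\rho=|x|$,
\[
\phi^{\eta}(x)=|\mathbb{S}^{n-2}|\int_{-1}^{1}e^{\eta\rho s}(1-s^2)^{\frac{n-3}{2}}ds .
\]
Standard Laplace-method asymptotics at the endpoint $s=1$ give
\[
\phi^{\eta}(x)\le C_{n,\eta}(1+|x|)^{-\frac{n-1}{2}}e^{\eta|x|}.
\]
Concretely, substitute $s=1-\sigma$, bound $(1-s^2)^{\frac{n-3}{2}}\le C\sigma^{\frac{n-3}{2}}$ near $\sigma=0$, and use $\int_0^\infty e^{-\eta\rho\sigma}\sigma^{\frac{n-3}{2}}d\sigma=C(\eta\rho)^{-\frac{n-1}{2}}$ for $\rho\ge1$. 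For $\rho\le1$ the function is simply bounded, which matches the factor $(1+|x|)$. In the case $n=1$ the bound is immediate, since $\phi^{\eta}\le 2e^{\eta|x|}$ and the exponent $\frac{n-1}{2}$ is then $0$.

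\textbf{Reduction to a radial integral.} Using the pointwise bound in polar coordinates,
\[
\int_{|x|\le t+R}(\phi^{\eta})^r dx\le C\int_0^{t+R}e^{r\eta\rho}(1+\rho)^{n-1-\frac{r(n-1)}{2}}d\rho .
\]

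\textbf{Estimating the radial integral.} The key step is that the exponential weight concentrates the integral near the upper endpoint. The integrand is $e^{r\eta\rho}(1+\rho)^{\beta}$ with $\beta=\frac{(n-1)(2-r)}{2}$, and we aim for
\[
\int_0^{M}e^{r\eta\rho}(1+\rho)^\beta d\rho\le C e^{r\eta M}(1+M)^\beta,
\]
followed by $e^{r\eta(t+R)}(1+t+R)^\beta\le C_R e^{r\eta t}(1+t)^\beta$.

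The argument splits according to the sign of $\beta$.
\begin{itemize}
\item If $\beta\ge0$, bound $(1+\rho)^\beta\le(1+M)^\beta$ and integrate the exponential directly.
\item If $\beta<0$, split the interval at $M/2$. On $[M/2,M]$ we have $(1+\rho)^\beta\le C(1+M)^\beta$, and the exponential integrates to at most $Ce^{r\eta M}$. On $[0,M/2]$ the integrand is at most $C(1+M)e^{r\eta M/2}$, which is $\le Ce^{r\eta M}(1+M)^\beta$ because exponential decay dominates any power.
\end{itemize}
In both cases every constant depends only on $n,R,\eta,r$.

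The main obstacle is the uniform pointwise decay $(1+|x|)^{-(n-1)/2}$ in the first step, which carries the geometric content. The remaining steps are elementary.
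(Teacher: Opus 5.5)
Your proof is correct. The paper gives no argument for this lemma and simply quotes it from Yordanov--Zhang, whose proof matches yours: the asymptotics $\phi^{\eta}(x)\sim c_n(\eta|x|)^{-\frac{n-1}{2}}e^{\eta|x|}$, followed by a radial integration in which the weight $e^{r\eta\rho}$ concentrates the mass near $\rho=t+R$. The only detail you leave implicit is that for $n=2$ the weight $(1-s^2)^{-1/2}$ is also singular at $s=-1$, but the range $s\in[-1,0]$ contributes only $O(1)$ to $\phi^{\eta}$.
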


\section{Proof of Theorem \ref{blowup01}}\label{sec44}
This section is devoted to the proof of Theorem \ref{blowup01}. By utilizing the coercive property of $\V(t)$ established in Lemma \ref{F21}, we shall prove the blow-up result for the solutions to \eqref{Equ}.

Let $d > \mu$ and assume that the parameter $\eta$ satisfies $\eta \ge \eta_1$, where $\eta_1$ is  prescribed by \eqref{eta1}.
To study the evolution of the system, we first define the following functional: 
\begin{equation}\label{L1}
 L^{\eta}_d(t):=
\frac{1}{8}\int_{1}^t  \int_{\R^n}|\partial_tu(x,s)|^p\w(x,s)dx ds
+\frac{C_0^{\eta}(f,g) \e}{24}.
\end{equation}
Furthermore, we introduce
$$\mathcal{H}_d^{\eta}(t):= \V(t)-L^{\eta}_d(t).$$
Thanks to \eqref{G2+bis41},  we see that $\mathcal{H}^{\eta}_d$
 satisfies
\begin{equation}\label{G2+bis6}
\begin{array}{rcl}
\d \frac{d}{dt}\mathcal{H}_d^{\eta}(t)+\frac{3
\gamma(t)}{4}\mathcal{H}_d^{\eta}(t) &\ge& \d \left(\frac{\eta}{4}-\frac{3
\gamma(t)}{32}\right)\int_{1}^t \int_{\R^n}|\partial_tu(x,s)|^p\w(x,s)dx ds\vspace{.2cm}\\ &&+  \d \frac{7}{8}\int_{\R^n}|\partial_tu(x,t)|^p\w(x,t) dx\\
&&+\d \frac{ C^{\eta}_0(f,g)\e}{4} \big( \eta-\frac{3
\gamma(t)}{24} \big), \quad \forall \ t \in[ 1,T).
\end{array}
\end{equation}
From \eqref{gamma}, since $d \ge \mu$ and $\eta \ge \eta_1\ge  d $, it follows that $\eta \le \gamma(t) \le 2\eta$. Consequently, we have:
$$\frac{\eta}{4} - \frac{3\gamma(t)}{32} \ge \frac{\eta}{4} - \frac{6\eta}{32} = \frac{\eta}{16} \ge 0.$$
Furthermore, by applying the upper bound $\gamma(t) \le 2\eta$, we obtain:
$$\eta  - \frac{3\gamma(t)}{24}  \ge \eta - \frac{6\eta}{24}=\frac{3\eta}4  \ge 0.$$
Thanks to the above, we write
\begin{equation}\label{GG}
\d \frac{d}{dt}\mathcal{H}_d^{\eta}(t)+\frac{3
\gamma(t)}{4}\mathcal{H}_d^{\eta}(t) \ge0, \quad \forall \ t \in [1,T).
\end{equation}
Integrating \eqref{GG} over $(1,t)$ after multiplication by  
$(1+t)^{\frac{3(\mu-d)}{4}}e^{\frac{3\eta t}2}$, we obtain
\begin{align}\label{est-G111}
 \mathcal{H}_d^{\eta}(t)
\ge \mathcal{H}_d^{\eta}(1)
 2^{\frac34(\mu-d)}e^{\frac{3 \eta}2 }(1+t)^{\frac34(d-\mu)}e^{-\frac32 \eta t}, \quad \forall \ t \in [1,T).
\end{align}
Using Lemma \ref{F21} , one can see that $\d \mathcal{H}^{\eta}_d(1)=\V(1)-\frac{C_0^{\eta}(f,g) \e}{24} \ge \frac{C_0^{\eta}(f,g)}{18}\e -\frac{C_0^{\eta}(f,g) \e}{24}\ge 0$. \\
Consequently, we infer that
\begin{equation}
\label{45}
\GG(t)\geq  L^{\eta}_d(t), \quad \forall \ t \in [1,T).
\end{equation}
Employing the H\"{o}lder's inequality together with the estimates \eqref{psi} and \eqref{F2postive}, a lower bound for the nonlinear term can written as
\begin{equation}\label{vt09}
\begin{array}{rcl}
\d \int_{\R^n}|\partial_tu(x,t)|^p\w(x,t)dx &\geq&\d (\V(t))^p\left(\int_{|x|\leq t+R}\w(x,t)dx\right)^{1-p} \vspace{.2cm}\\ &\geq& C (\V(t))^p (1+t)^{-\frac{(d+n-1)(p-1)}2},
 \qquad \forall \ t \in [1,T).
\end{array}
\end{equation}
Now, recall the definition of $ L^{\eta}_d(t)$, given by \eqref{L1}, and injecting \eqref{45} in \eqref{vt09}, we conclude that
\begin{equation}
\label{bl}
\frac{d}{dt}L^{\eta}_d(t)\geq 
C (L_d^{\eta}(t))^p (1+t)^{-\frac{(d+n-1)(p-1)}2}, \qquad \forall \ t \in [1,T).
\end{equation}

Let us recall that the condition $1 < p < p_{\mathrm{Gla}}(n+\mu)$ ensures that $\frac{(n+\mu-1)(p-1)}{2} < 1$, which is equivalent to $\mu < \frac{p+1}{p-1} - n$. Consequently, for any choice of the parameter $d \in (\mu, \frac{p+1}{p-1} - n)$, the exponent 
\begin{equation}
\theta := 1 - \frac{(n+d-1)(p-1)}{2}
\end{equation}
is strictly positive. By virtue of the positivity of $L_d^{\eta}(t)$, we may divide both sides of the differential inequality \eqref{bl} by $(L_d^{\eta}(t))^p$. Integrating the resulting expression over the interval $[1, t]$ for any $t \in [1, T)$, we find
\begin{equation}\label{int6}
\frac{1}{p-1} \left[ \big(L_d^{\eta}(1)\big)^{-(p-1)} - \big(L_d^{\eta}(t)\big)^{-(p-1)} \right] \geq \frac{C}{\theta} \left[ (1+t)^{\theta} - 2^{\theta} \right].
\end{equation}
Since $\theta > 0$ and $L_d^{\eta}(t) > 0$ for all $t \in [1, T)$, we may neglect the second non-negative term on the left-hand side of \eqref{int6}. Recalling from \eqref{L1} that $L_d^{\eta}(1) = \frac{C_0^{\eta}(f,g) \varepsilon}{24}$, we arrive at the estimate
\begin{equation}\label{ep0}
 \varepsilon^{-(p-1)} \geq C \left[ (1+t)^{\theta} - 2^{\theta} \right],
\end{equation}
where $C > 0$ is a constant independent of $\varepsilon$. From \eqref{ep0}, it is clear that there exist $\varepsilon_0>0$  sufficiently small, such that if  $\varepsilon\le \varepsilon$,  the lifespan $T_{\varepsilon}$ must be finite and satisfies the upper bound
\begin{equation}
T_{\varepsilon} \leq  C\varepsilon^{-\frac{p-1}{\theta}}.
\end{equation}
This achieves the proof of Theorem \ref{blowup01}.\hfill $\Box$
\section*{Declarations}
\begin{itemize}
    \item \textbf{Conflict of interest:} The author declares no conflict of interest.
    \item \textbf{Data availability:} Our manuscript has no associated data.
\end{itemize}

\end{document}